\newcommand{\vf}{\varphi}
\newcommand{\vk}{\varkappa}
\newcommand{\cB}{{\mathcal B}}
\newcommand{\cK}{{\mathcal K}}
\newcommand{\1}{1\!\!\,{\rm I}}
\newcommand{\mbR}{{\mathbb R}}
\begin{document}



\section{Introduction}

The main objective of the research we undertake in the present paper is to get a description of the square integrable functionals measurable with respect to certain coalescing stochastic flows. Our motivation for such investigation is to gain better understanding of the noise associated with the coalescing stochastic flow. The description of square integrable functionals measurable with respect to the smooth stochastic flow is well-known. Namely, consider a stochastic flow $(\vf_{s,t})_{0\leq s\leq t}$ on $\mbR$ generated by the stochastic differential equation
$$
\begin{cases}
d\vf_{s,t}(x)=a(t,\vf_{s,t}(x))dt+b(t,\vf_{s,t}(x))dw(t) \\
\vf_{s,s}(x)=x.
\end{cases}
$$
with bounded and measurable coefficients $a$ and $b,$ $b\geq \delta>0.$ Then every square integrable $\vf-$measurable functional admits a unique expansion as a series of multiple stochastic integrals with respect to initial Wiener process $w$ (the so-called It\^o-Wiener expansion). The well-known Krylov-Veretennikov formula \cite{VK} is an example of such expansion for a special functional $f(\vf_{s,t}(x))$
\begin{equation}
\label{intro2}
f(\vf_{s,t}(x))=\sum^{\infty}_{n=0} \int_{\Delta_n(s, t)}\Bigg( T_{s,t_1}b\frac{\partial}{\partial y}T_{t_1,t_2}
\ldots b\frac{\partial}{\partial y} T_{t_n,t} f\Bigg)(x)dw(t_1)\ldots dw(t_n).
\end{equation}
Here $f$ is bounded and measurable function on $\mbR$ and $T_{s,t}g(y)$ is the solution in $(s,y)$ of the parabolic boundary value problem
$$
\begin{cases}
u'_s+\frac{1}{2}b(s,y)^2 u''_{y y}+a(s,y) u'_y=0 \\
u(t,y)=g(y),
\end{cases}
$$
$\Delta_n(s,t)$ is the $n-$dimensional simplex $\{(t_1,\ldots , t_n): \ s<t_1<\ldots<t_n<t )\}.$

Trying to obtain an analogue of the It\^o-Wiener expansion for func\-ti\-o\-nals from a coalescing stochastic flow one 
faces the difficulty that in ge\-ne\-ral there is no Gaussian noise that generates the flow \cite{lJR}. The Arratia flow $\{x(u,t)\}_{u\in \mbR, t\geq 0}$ of 
coalescing Brownian particles on the line \cite{Arr} is the typical example of such situation. Still, finite set $\{x(u_1,\cdot), \ldots , x(u_n,\cdot)\}$ of 
trajectories from Arratia flow can be built from independent Wiener processes $\{\tilde{w}_1,\ldots ,\tilde{w}_n\}$ and a functional from trajectories $x(u_i,\cdot),$ $1\leq i\leq n$ can be expressed as a functional from $\tilde{w}_i,$ $1\leq i\leq n.$ Such approach was developed in \cite{D} to obtain the Krylov-Veretennikov expansion for the 
functional $f(x(u_1,t), \ldots , x(u_n,t))$ in terms of stochastic integrals with respect to Wiener processes $\tilde{w}_i'$s. However, the obtained expansion is not intrinsic in the sense it depends on the way trajectories $x(u_1,\cdot),\ldots ,x(u_n,\cdot)$ are constructed from Wiener processes $\tilde{w}_1,\ldots \tilde{w}_n.$          

The present article is devoted to the detailed study of a model example of a coalescing flow. We describe the structure of square integrable functionals measurable with respect to the coalescing flow of two particles, a Brownian one and a ``heavy'' deterministic one. That is, after a moment of meeting both particles move together according to the law of the deterministic one. Formally, let $(w(t))_{t\in [0,1]}$ be a Wiener process on a probability space $(\Omega, \mathcal{F}, \mathbb{P})$ and $g:[0,1]\to \mbR$ be a given continuous function. Let $\tau_g$ be the first moment when $w$ hits the continuous function $g.$ Then the noise generated by the described system is simply the noise generated by the stopped process $\eta_g=w(\min(\tau_g,\cdot)).$ In what follows we search for a description of $\eta_g-$measurable functionals as a series of stochastic integrals with respect to $\eta_g.$ Evidently, multiple stochastic integrals
\begin{equation}
\label{eq00}
\int^{\tau_g}_0\int^{t_n}_0\ldots \int^{t_2}_0 a_n(t_1,\ldots,t_n) d\eta_g(t_1)\ldots d\eta_g(t_n), \ a_n\in L^2_{symm}([0,1]^n).
\end{equation}
form a total subset of $L^2(\eta_g).$ However, two integrals of a kind \eqref{eq00} having different order may be not orthogonal. Moreover, an expansion of the function $f\in L^2(\eta_g)$ into the sum
$$
f=\sum^{\infty}_{n=0} \int^{\tau_g}_0\int^{t_n}_0\ldots \int^{t_2}_0 a_n(t_1,\ldots,t_n) d\eta_g(t_1)\ldots d\eta_g(t_n)
$$
is not unique, as the following example shows.

\begin{example}
Consider the function $f=w(\tau_g)=\eta_g(1).$ Evidently,
\begin{equation}
\label{ex01}
f=\int^{\tau_g}_0 d\eta_g(t).
\end{equation}
On the other hand, $f$ is measurable with respect to $w$ and it admits the It\^o-Wiener expansion
$$
f=\sum^{\infty}_{n=1} \int^{1}_0\int^{t_n}_0\ldots \int^{t_2}_0 a_n(t_1,\ldots,t_n) dw(t_1)\ldots dw(t_n).
$$
Taking the conditional expectation  with respect to $\eta_g$ one obtains another representation
\begin{equation}
\label{ex02}
f=\sum^{\infty}_{n=0} \int^{\tau_g}_0\int^{t_n}_0\ldots \int^{t_2}_0 a_n(t_1,\ldots,t_n) d\eta_g(t_1)\ldots d\eta_g(t_n).
\end{equation}
For every $b\in L^2[0,1]$ one has
$$
\int^1_0 a_1(t) b(t) dt= \mathbb{E}f\int^1_0 b(t)dw(t)= \int^1_0 b(t)\mathbb{P}(\tau_g\geq t) dt.
$$
Hence, $a_1(t)=\mathbb{P}(\tau_g\geq t)$ and
$$
\int^{\tau_g}_0 d\eta_g(t) \ne \int^{\tau_g}_0 a_1(t)d\eta_g(t).
$$
This proves that the representations \eqref{ex01} and \eqref{ex02} are different.
\end{example}

Applications of several orthogonalisation procedures to integrals \eqref{eq00} were described in \cite{4,2}. Still, the detailed description of the resulting orthogonal objects remained as an open problem.

The approach we propose in the present article is based on transformations of the initial probability measure $\mathbb{P}$ by functionals $\1_{\tau_g\geq t}$ in the manner of \cite[ch. 9]{5}. Namely, we study the structure of functionals from $w$ that are square integrable with respect to the new probability measure $d\tilde{\mathbb{P}^t}=\frac{\1_{\tau_g \geq t}}{\mathbb{P}(\tau_g \geq t)}d\mathbb{P}.$ Such approach allows to use the Girsanov theorem, which leads to combinations of stochastic integrals of the kind
$$
J^t_na_n=\sum^{n}_{m=0}\int_{[0,t]^{n-m}}\int_{[0,t]^m}a_n(s,r)k^t_{n,m}(r,w(r))dr dw(s)
$$
with explicitly described kernels $k^t_{n,m}$, such that for $n\ne k$ functions $J^t_na_n$ and $J^t_kb_k$ are orthogonal with respect to the measure $\tilde{\mathbb{P}}^t$ and
each $f\in L^2(\sigma(w),\tilde{\mathbb{P}}^t)$ can be uniquely expanded as a sum $f=\sum^{\infty}_{n=0} J^t_na_n.$ In other words, functions $J^t_na_n$ constitute the orthogonal expansion of the space
$L^2(\sigma(w),\tilde{\mathbb{P}}^t)$ analogous to the It\^o-Wiener expansion. Further we show that stochastic integrals of the form
$\int^{\tau_g}_0 J^t_n a^t_n d\eta_g(t)$ constitute the orthogonal expansion of the space $L^2(\eta_g).$

The article is organized in the following way. A short section 2 is devoted to the preliminary material on the structure of the space of square integrable functionals on the Banach space $\mathcal{X}$ with Gaussian measure $\mu.$ In section 3 we construct the analogue of the It\^o-Wiener expansion of the space $L^2(\mathcal{X},\nu)$ determined by a measurable isomoprhism $T$ between probability spaces $(\mathcal{X},\nu)$ and $(\mathcal{X},\mu).$ It is proved that under mild assumptions on $T$ elements of the constructed expansion can be expressed as a certain compensation of $\mu-$orthogonal polynomials (theorem 3.3). Section 4 is devoted to the case of classical Wiener space. Application of Girsanov theorem to the construction of measurable isomorphisms on the classical Wiener space is discussed. The measurable isomorphism cor\-res\-pon\-ding to the functional $\1_{\tau_g\geq t}$ is constructed (theorem 4.6). As a corollary, an orthogonal expansion of $L^2(\sigma(w),\tilde{\mathbb{P}}^t)$ is built. Finally, in section 5 the orthogonal expansion of $L^2(\eta_g)$ analogous to the It\^o-Wiener expansion of $L^2(w)$ is constructed (theorem 5.2).

\section{Preliminary material}

In this section we collect definitions and constructions necessary for further exposition.

$\mathcal{X}$ is a real separable Banach space; $\cB$ is the Borel $\sigma$-field in $\mathcal{X}$;
$\mu$ is a centered Gaussian measure  on $(\mathcal{X},\cB)$ with $\mbox{supp} \ \mu=\mathcal{X}$;
$\mathcal{H}$ is the Cameron-Martin space of the measure $\mu,$ that is  a real separable Hilbert space densely and compactly embedded into $\mathcal{X}$ and satisfying the property $\int_{\mathcal{X}} e^{il(x)}\mu(dx)=e^{-\frac{1}{2} |l|^2_{\mathcal{H}}},$ $l\in \mathcal{X}^*$ \cite{Bog_G}.

$L^2(\mathcal{X},\mu)$ is the space of all $\mu-$square integrable $\mbR-$valued functions; $\mathcal{L}^{(n)}_{HS}(\mathcal{H})$ is the space of all $n$-linear symmetric Hilbert-Schmidt forms $A_n:\mathcal{H}^n\to \mbR$;
$\mathcal{L}^{(n)}_{HS,fin}(\mathcal{H})$ is the set of all forms $A_n\in \mathcal{L}^{(n)}_{HS}(\mathcal{H})$ satisfying
\begin{equation}
\label{eq1}
A_n(h,\ldots,h)=\prod^d_{i=1} (h,e_i)^{k_i}, \ h\in\mathcal{H}
\end{equation}
for some orthonormal in $\mathcal{H}$ system of elements $e_1,\ldots,e_d \in \mathcal{X}^*$.
The span of $\mathcal{L}^{(n)}_{HS,fin}(\mathcal{H})$ is dense in $\mathcal{L}^{(n)}_{HS}(\mathcal{H}).$

$H_k(t)=(-1)^k e^{\frac{t^2}{2}}(e^{-\frac{t^2}{2}})^{(k)}, \ t\in \mbR$ is the $k$-th Hermite polynomial.

$I^{\mu}_n: \mathcal{L}^{(n)}_{HS}(\mathcal{H}) \rightarrow L^2(\mathcal{X},\mu)$ is the unique isomorphic embedding whose action on the form $A_n\in \mathcal{L}^{(n)}_{HS,fin}(\mathcal{H})$ satisfying \eqref{eq1} is given by the formula
$$
I^{\mu}_n A_n (x)=\prod^d_{i=1} H_{k_i} (e_i(x)), \ x\in \mathcal{X},
$$
(see \cite{1,D_stoch_an,M}). In fact, $\frac{1}{\sqrt{n!}}I^{\mu}_n$ is the isometry between $\mathcal{L}^{(n)}_{HS}(\mathcal{H})$ and its image $\cK_n(\mathcal{X},\mu)=I^{\mu}_n(\mathcal{L}^{(n)}_{HS}(\mathcal{H})).$ The space $\cK_n(\mathcal{X},\mu)$ is exactly the space of $\mu-$orthogonal polynomials of the degree $n.$ Spaces $\cK_n(\mathcal{X},\mu)$ are mutually orthogonal and $L^2(\mathcal{X},\mu)$ coincides with their Hilbert sum (the It\^o-Wiener expansion):
$$
L^2(\mathcal{X},\mu)=\mathop{\oplus}\limits^\infty_{n=0}\cK_n(\mathcal{X},\mu).
$$
Analogous construction is valid in the case of $\mathcal{E}-$valued $\mu-$square integrable functionals, where $\mathcal{E}$ is any real separable Hilbert space \cite{Bog_G}.

$\mathcal{C}_t$ is the space of all continuous functions $x:[0,t]\rightarrow \mbR, \ x(0)=0$ endowed with the sup-norm; $\cB_t$ is the Borel $\sigma-$field in $\mathcal{C}_t,$ $\cB_t$ is identified with the $\sigma-$field
$\sigma\{x(s): \ s\in[0,t]\}$ in $\mathcal{C}_1.$ Index $t$ will be supressed when $t=1.$
The Wiener measure on the space $(\mathcal{C}_t,\cB_t)$ will be denoted by $\mu_t.$ The Cameron-Martin space $\mathcal{H}_t$  of the measure $\mu_t$ is identified with $L^2([0,t])$ via the inclusion  \cite{1}
$$
L^2([0,t])\ni h \mapsto \int^{\cdot}_0 h(s)ds \in \mathcal{C}_t.
$$
The space $\mathcal{L}^{(n)}_{HS}(\mathcal{H}_t)$ is identified with the space $L^2_s([0,t]^n)$ of all symmetric functions from $L^2([0,t]^n).$ The mapping $I^{\mu_t}_n$ coincides with the $n-$fold stochastic integration with respect to the canonical Wiener process $(w_s)_{s\in[0,t]}$ on $(\mathcal{C}_t,\mu_t),$ $w_s(x)=x(s)$ \cite{1}:
$$
I^{\mu_t}_n a_n =\int_{[0,t]^n} a_n(s) dw_s:=n!\int^t_0\int^{s_n}_0\ldots \int^{s_2}_0 a_n(s_1,\ldots,s_n)dw_{s_1}\ldots dw_{s_n}.
$$
The space $(\mathcal{C},\mathcal{H},\mu)$ will be referred to as the classical Wiener space.

\section{Measurable isomorphisms and orthogonal expansions}

In the following simple lemma we show that a measurable and injective mapping $T:\mathcal{X}\to \mathcal{X}$ taking measure $\nu$ to the Gaussian measure $\mu$ allows to transfer the It\^o-Wiener expansion of $L^2(\mathcal{X},\mu)$ to the
space $L^2(\mathcal{X},\nu).$

\begin{lemma}\label{isomorphism}
Let $\nu$ be a probability measure on $(\mathcal{X},\cB).$ Assume that a Borel set
$\mathcal{X}_0\subset \mathcal{X}$ and a mapping $T:\mathcal{X}_0\rightarrow \mathcal{X}$ satisfy conditions

1) $\nu(\mathcal{X}_0)=1$;

2) T is measurable and injective;

3) $\nu \circ T^{-1}=\mu$.

Then the mapping $\widehat{T}f=f\circ T$ is the isometry between spaces $L^2(\mathcal{X},\mu)$ and $L^2(\mathcal{X},\nu).$ In particular, mappings
$$
I^{\nu}_n=\widehat{T}\circ I^{\mu}_n
$$
satisfy following properties:

1) $\frac{1}{\sqrt{n!}}I^{\nu}_n:\mathcal{L}^{(n)}_{HS}(\mathcal{H})\to L^2(\mathcal{X},\nu)$ is the isometrical embedding;

2) spaces $\cK_n(\mathcal{X},\nu)=I^{\nu}_n(\mathcal{L}^{(n)}_{HS}(\mathcal{H}))$ are mutually orthogonal;

3) $L^2(\mathcal{X},\nu)=\mathop{\oplus}\limits^\infty_{n=0}\cK_n(\mathcal{X},\nu).$

Equivalently, for each $f\in L^2(\mathcal{X},\nu)$ there exists unique sequence $(A_n)_{n\geq 0}$ of forms $A_n\in \mathcal{L}^{(n)}_{HS}(\mathcal{H}),$ such that
\begin{equation}
\label{eq111}
f=\sum^{\infty}_{n=0} I^{\nu}_n A_n \ \mbox{in} \ L^2(\mathcal{X},\nu).
\end{equation}
Moreover, \eqref{eq111} holds if and only if $f\circ T^{-1}=\sum^{\infty}_{n=0} I^{\mu}_n A_n$ in $L^2(\mathcal{X},\mu).$

 \end{lemma}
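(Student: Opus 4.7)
The structure of the statement reduces everything to showing that $\widehat{T}:L^2(\mathcal{X},\mu)\to L^2(\mathcal{X},\nu)$ is a bijective linear isometry; the orthogonal decomposition of $L^2(\mathcal{X},\nu)$ and the expansion equivalence are then automatic consequences of the analogous facts for the Gaussian measure $\mu$. Norm preservation comes directly from hypothesis 3: the change-of-variables formula gives
$$
\int_{\mathcal{X}}|f\circ T|^2 d\nu=\int_{\mathcal{X}}|f|^2\,d(\nu\circ T^{-1})=\int_{\mathcal{X}}|f|^2 d\mu
$$
for every $f\in L^2(\mathcal{X},\mu)$, so $\widehat{T}$ is a well-defined linear isometry, hence injective with closed range.

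The step I expect to require the most care is surjectivity of $\widehat{T}$, since set-theoretic inverses of Borel injections are not automatically measurable. My plan is to invoke the Lusin-Souslin theorem: because $\mathcal{X}$ is Polish, a Borel injection from a Borel subset $\mathcal{X}_0$ into $\mathcal{X}$ has Borel image and is a Borel isomorphism onto it. Then $T(\mathcal{X}_0)$ is Borel with $\mu(T(\mathcal{X}_0))=\nu(\mathcal{X}_0)=1$ by hypothesis 3, and the inverse map $T^{-1}:T(\mathcal{X}_0)\to\mathcal{X}_0$ is Borel measurable. Given $g\in L^2(\mathcal{X},\nu)$, the Borel function $f:=g\circ T^{-1}$, extended by $0$ off $T(\mathcal{X}_0)$, lies in $L^2(\mathcal{X},\mu)$ by the same change of variables and satisfies $\widehat{T}f=g$ $\nu$-a.s. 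Equivalently, one could argue that the pulled-back $\sigma$-field $\sigma(T)$ coincides with $\cB\cap\mathcal{X}_0$ (again via Lusin-Souslin) and then apply the Doob-Dynkin lemma to represent every $\nu$-measurable function in the form $f\circ T$.

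Once $\widehat{T}$ is known to be a Hilbert-space isomorphism, the three properties of $I^{\nu}_n=\widehat{T}\circ I^{\mu}_n$ follow formally from the corresponding properties of $I^{\mu}_n$ recalled in Section 2. Property 1 is the composition of the isometric embedding $\frac{1}{\sqrt{n!}}I^{\mu}_n$ with the isometry $\widehat{T}$; property 2 comes from $\langle\widehat{T}u,\widehat{T}v\rangle_{L^2(\mathcal{X},\nu)}=\langle u,v\rangle_{L^2(\mathcal{X},\mu)}$ applied to $u\in\cK_n(\mathcal{X},\mu)$, $v\in\cK_k(\mathcal{X},\mu)$ with $n\neq k$; property 3 holds because a unitary isomorphism maps Hilbert sums to Hilbert sums. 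Finally, the equivalence between $f=\sum I^{\nu}_n A_n$ in $L^2(\mathcal{X},\nu)$ and $f\circ T^{-1}=\sum I^{\mu}_n A_n$ in $L^2(\mathcal{X},\mu)$ reduces to applying $\widehat{T}^{-1}$ term by term (which makes sense $\mu$-a.s.\ since $\mu(T(\mathcal{X}_0))=1$), and uniqueness of the forms $A_n$ is inherited from uniqueness of the Gaussian It\^o-Wiener expansion together with injectivity of each $I^{\mu}_n$.
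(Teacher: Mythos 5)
Your proposal is correct and follows essentially the same route as the paper: the paper likewise handles surjectivity of $\widehat{T}$ by invoking the Souslin theorem to produce a measurable left inverse $S$ of $T$ (your Lusin--Souslin argument is the same device), after which the isometry and the formal transfer of the It\^o--Wiener decomposition are immediate. No gaps.
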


 \begin{proof}[Proof of Lemma \ref{isomorphism}]
 According to the Souslin theorem \cite[th.6.8.6]{Bog}, there exists measurable $S:\mathcal{X}\to \mathcal{X}$ such that $S(T(x))=x,$ $x\in \mathcal{X}_0.$ Hence, $\widehat{T}$ is the isometry of $L^2(\mathcal{X},\mu)$ onto
$L^2(\mathcal{X},\nu)$. All other assertions immediately
follow from this fact.
 \end{proof}

Operators $I^{\nu}_n$ defined in Lemma \ref{isomorphism} produce the orthogonal expansion of $L^2(\mathcal{X},\nu).$ In the following theorem sufficient conditions are given when operators $I^{\nu}_n$ can be written in a more detailed form.

\begin{remark}[Notational]
The space $\mathcal{L}^{(n)}_{HS}(\mathcal{H})$ is canonically identified with the subspace of the space of all symmetric $(n-m)-$linear $\mathcal{L}^{(m)}_{HS}(\mathcal{H})-$valued Hilbert-Schmidt forms on $\mathcal{H}.$ Accordingly, $I^{\mu}_{n-m} A_n$ is the $\mu-$square integrable $\mathcal{L}^{(m)}_{HS}(\mathcal{H})-$valued function on $\mathcal{X}.$
\end{remark}

\begin{theorem}\label{thm1}

Assume that additionally to conditions 1)-3) of Lemma \ref{isomorphism} conditions

4)  for each $x\in \mathcal{X}_0$ $T(x)-x \in \mathcal{H};$

5) $\nu\ll \mu$\\
are satisfied.

Then for all $n\geq 0$ and $A_n\in \mathcal{L}^{(n)}_{HS}(\mathcal{H})$
\begin{equation}
\label{eq113}
I^{\nu}_nA_n(x)=\sum^n_{m=0} \binom{n}{m} I^{\mu}_{n-m} A_n(x)(T(x)-x,\ldots,T(x)-x), \ \nu-\mbox{a.e.}
\end{equation}
 \end{theorem}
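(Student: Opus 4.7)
The plan is to recognize \eqref{eq113} as the Cameron--Martin shift formula for the $n$-th Wiener chaos element, applied with the (now random) shift $h(x):=T(x)-x\in\mathcal{H}$ granted by condition~4. Indeed, Lemma~\ref{isomorphism} yields $I^{\nu}_n A_n(x)=I^{\mu}_n A_n(T(x))$ for $\nu$-a.e.\ $x$, so \eqref{eq113} is equivalent to the pointwise identity
\[
I^{\mu}_n A_n\bigl(x+h(x)\bigr)=\sum_{m=0}^{n}\binom{n}{m}\,I^{\mu}_{n-m}A_n(x)\bigl(h(x),\ldots,h(x)\bigr),\qquad \nu\text{-a.e. }x,
\]
and condition~5 ($\nu\ll\mu$) is exactly what makes the right-hand side, a priori only $\mu$-a.e.\ defined, well defined $\nu$-a.e.

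I would first settle the identity on forms $A_n\in\mathcal{L}^{(n)}_{HS,fin}(\mathcal{H})$ of type \eqref{eq1}. For such $A_n$, $I^{\mu}_n A_n$ is the explicit polynomial $\prod_{i=1}^{d}H_{k_i}(e_i(\cdot))$, and by polarisation the auxiliary form $B_h(v):=A_n(v,\ldots,v,h,\ldots,h)$ (with $n-m$ copies of $v$ and $m$ copies of $h$) is a linear combination of products of type \eqref{eq1} with parameters $(k_i-j_i)$, indexed by multi-indices $(j_i)$ satisfying $\sum_i j_i=m$. Consequently, $\binom{n}{m} I^{\mu}_{n-m}A_n(x)\bigl(h^{\otimes m}\bigr)$ equals the corresponding combination of $\prod_i\binom{k_i}{j_i}e_i(h)^{j_i}H_{k_i-j_i}(e_i(x))$. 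Substituting $e_i(x+h)=e_i(x)+e_i(h)$ into each $H_{k_i}$ via the standard translation identity $H_k(a+b)=\sum_{j=0}^{k}\binom{k}{j}H_{k-j}(a)b^j$ and multiplying out the $d$ factors, then regrouping by $m=\sum_i j_i$, reproduces precisely $\sum_{m}\binom{n}{m}I^{\mu}_{n-m}A_n(x)\bigl(h^{\otimes m}\bigr)$. The resulting equality is a polynomial identity in $x\in\mathcal{X}$ and $h\in\mathcal{H}$, hence holds pointwise on $\mathcal{X}_0$ with $h=T(x)-x$.

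To extend to an arbitrary $A_n$, pick $A_n^{(k)}\in\mathcal{L}^{(n)}_{HS,fin}$ with $A_n^{(k)}\to A_n$ in Hilbert--Schmidt norm. By the isometry property, $I^{\nu}_nA_n^{(k)}\to I^{\nu}_nA_n$ in $L^2(\nu)$; simultaneously, for every $0\leq m\leq n$ the map $A\mapsto I^{\mu}_{n-m}A$ is continuous from $\mathcal{L}^{(n)}_{HS}$ into $L^2(\mathcal{X},\mu;\mathcal{L}^{(m)}_{HS})$. Extracting a common diagonal subsequence $(k_\ell)$ then provides $\mu$-a.e., and so by $\nu\ll\mu$ also $\nu$-a.e., convergence $I^{\mu}_{n-m}A_n^{(k_\ell)}(x)\to I^{\mu}_{n-m}A_n(x)$ in $\mathcal{L}^{(m)}_{HS}$ norm, simultaneously for every $m$. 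Since for each such $x$ the evaluation $B\mapsto B(h(x),\ldots,h(x))$ is continuous on $\mathcal{L}^{(m)}_{HS}(\mathcal{H})$ with $h(x)$ held fixed, the right-hand side for $A_n^{(k_\ell)}$ converges $\nu$-a.e.\ to the right-hand side for $A_n$; combining this with $\nu$-a.e.\ convergence of the left-hand side along a further subsequence yields \eqref{eq113} for $A_n$. I expect this last step to be the main obstacle: $h(x)$ is random and no $L^2(\nu)$ control of its $\mathcal{H}$-norm is assumed, so a straight continuity argument in $A_n$ is not available, and the diagonal-subsequence trick powered by condition~5 is exactly what bypasses the missing integrability.
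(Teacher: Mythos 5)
Your proposal is correct and follows essentially the same route as the paper: first the Hermite translation identity $H_k(a+b)=\sum_j\binom{k}{j}H_{k-j}(a)b^j$ combined with the symmetry of $A_n$ settles \eqref{eq113} pointwise for forms of type \eqref{eq1}, and then a diagonal-subsequence argument with $\nu\ll\mu$ transfers the $\mu$-a.e.\ convergence of $I^{\mu}_{n-m}A^{(k)}_n$ to $\nu$-a.e.\ convergence of the right-hand side, exactly as in the paper. The only cosmetic difference is that you should approximate $A_n$ by elements of the \emph{span} of $\mathcal{L}^{(n)}_{HS,fin}(\mathcal{H})$ (which is what is dense), the identity extending to the span by linearity of both sides in $A_n$.
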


\begin{proof}[Proof of Theorem \ref{thm1}]
Consider the case $A_n\in \mathcal{L}^{(n)}_{HS,fin}(\mathcal{H}),$ $A_n$ satisfies \eqref{eq1}.
The symmetry of $A_n$ implies the equality
$$
A_n(h,\ldots,h,u,\ldots,u)=\binom{n}{m}^{-1}\sum\limits_
{\begin{subarray}{l}
0\leq m_i\leq k_i,\\ \sum^d_{i=1}m_i=m  \end{subarray}} \prod^d_{i=1} \Bigg(\binom{k_i}{m_i} (h,e_i)^{k_i-m_i}(u,e_i)^{m_i}\Bigg).
$$
Consequently,
$$
\sum\limits_
{\begin{subarray}{l}
0\leq m_i\leq k_i,\\ \sum^d_{i=1}m_i=m  \end{subarray}}
\prod^d_{i=1} \Bigg(\binom{k_i}{m_i} H_{k_i-m_i}(e_i(x))(T(x)-x,e_i)^{m_i}\Bigg)=
$$
$$
= \binom{n}{m} I^{\mu}_{n-m}A_n(x)(T(x)-x,\ldots,T(x)-x).
$$
On the other hand, the relation $H_n(x+y)=\sum^n_{m=0}\binom{n}{m} H_{n-m}(x)y^m$ implies following equalities.
$$
I^{\nu}_n A_n(x)=I^{\mu}_n A_n(T(x))=\prod^d_{i=1}H_{k_i}(e_i(x)+(T(x)-x,e_i))=
$$
$$
=\sum\limits_
{\begin{subarray}{l}
0\leq m_i\leq k_i,\\ 1\leq i \leq d \end{subarray}} \prod^d_{i=1} \Bigg(\binom{k_i}{m_i} H_{k_i-m_i}(e_i(x))(T(x)-x,e_i)^{m_i}\Bigg)=
$$
$$
=\sum^n_{m=0}\binom{n}{m} I^{\mu}_{n-m}A_n(x)(T(x)-x,\ldots,T(x)-x).
$$
The relation \eqref{eq113} is checked in the case $A_n\in \mathcal{L}^{(n)}_{HS,fin}(\mathcal{H}).$

Consider the general case $A_n\in \mathcal{L}^{(n)}_{HS}(\mathcal{H}).$ Choose a sequence $(A^{(k)}_n)_{k\geq 1}$ of the forms belonging to the span of $\mathcal{L}^{(n)}_{HS,fin}(\mathcal{H}),$ which converges to $A_n$ in $\mathcal{L}^{(n)}_{HS}(\mathcal{H}).$ Then for each $m=0,\ldots, n$ $(I^{\mu}_{n-m}A^{(k)}_n)_{k\geq 1}$ converges to
$I^{\mu}_{n-m}A_n$ in $L^2(\mathcal{X},\mu;\mathcal{L}^{(m)}_{HS}(\mathcal{H}))$ and $(\widehat{T}(I^{\mu}_nA^{(k)}_n))_{k\geq 1}$ converges to $\widehat{T}(I^{\mu}_nA_n)$ in $L^2(\mathcal{X},\nu).$
Passing to a subsequence we may asume that for $\mu-$a.a. $x\in \mathcal{X}$ for each $m=0,\ldots,n$
$I^{\mu}_{n-m} A^{(k)}_n(x)\rightarrow I^{\mu}_{n-m} A_n(x),$ $k\to \infty$ in $\mathcal{L}^{(m)}_{HS}(\mathcal{H})$  and for $\nu-$a.a. $x\in \mathcal{X}$ $\widehat{T}(I^{\mu}_n A^{(k)}_n)(x)\rightarrow \widehat{T}(I^{\mu}_n A_n)(x),$ $k\to \infty.$ The assumption $\nu\ll \mu$ implies that for $\nu-$a.a. $x\in \mathcal{X}$
$$
\widehat{T}(I^{\mu}_n A_n)(x)=\lim_{k\to \infty}\widehat{T}(I^{\mu}_n A^{(k)}_n)(x)=
$$
$$
=\lim_{k\to \infty} \sum^n_{m=0}\binom{n}{m} I^{\mu}_{n-m}A^{(k)}_n(x)(T(x)-x,\ldots,T(x)-x)=
$$
$$
=\sum^n_{m=0}\binom{n}{m} I^{\mu}_{n-m}A_n(x)(T(x)-x,\ldots,T(x)-x).
$$
\end{proof}

\section{Transformations of the Wiener measure}

In this section we study measurable transformations on the classical Wiener space $(\mathcal{C},\mathcal{H},\mu)$. In such setting for any probability measure $\nu$ on $(\mathcal{C},\cB)$ which is absolutely continuous with respect to the Wiener measure $\mu,$ the Girsanov theorem allows to construct the mapping $T$ satisfying conditions 1),2),4) of Theorem \ref{thm1}.

Denote by $\rho$ the density of $\nu$ with respect to $\mu,$ $\rho=\frac{d\nu}{d\mu}.$ By the Clark theorem \cite[th. 5.6]{3} there exists an adapted process $h:[0,1]\times \mathcal{C}\to \mbR,$ such that
$$
\rho(x)=1 + \int^1_0 h(s,x) dw_s,  \ \int^1_0 h^2(s)ds <\infty \ \mu-\mbox{a.s}.
$$
Denote $\rho(t,x)=1 + \int^t_0 h(s,x) dw_s,$ so that $(\rho(t))_{t\in[0,1]}$ is the continuous version of the martingale
$(\mathbb{E}[\rho/\cB_t])_{t\in[0,1]}.$ According to the Girsanov theorem \cite[th. 6.2]{3} the mapping
\begin{equation}
\label{eq31}
T(x)(t)=x(t)-\int^t_0 \frac{h(s,x)}{\rho(s,x)} ds
\end{equation}
sends measure $\nu$ to the measure $\mu.$ It is well-defined on the set
$$
\mathcal{X}_0=\{x\in \mathcal{X}: \inf_{t\in[0,1]} \rho(t,x)>0\}(=\{x\in \mathcal{X}: \rho(x)>0\})
$$
of full measure $\nu$ and evidently satisfy condition $T(x)-x\in \mathcal{H}.$
Unfortunately, $T$ may fail to be injective on the set of a full measure $\nu$ as it is seen in the famous example due to B.~S.~Tsirelson \cite{T}.

\begin{example} There exists bounded adapted function $b:[0,1]\times \mathcal{C} \to \mbR$ such that for any weak solution $(\xi_t,\tilde{w}_t)_{t\in[0,1]}$ of SDE
\begin{equation}
\label{eq32}
\begin{cases}
d\xi_t=b(t,\xi)dt+d\tilde{w}_t \\
\xi_0=0
\end{cases}
\end{equation}
the $\sigma-$field $\sigma(\xi)$ contains a proper set independent of $\tilde{w}$ \cite[ch. V, th. 18.3]{RW2}. Denote by $\nu$ the distribution of $\xi$ in $(\mathcal{C},\cB).$ Assume that the mapping $T$ constructed in \eqref{eq31} is injective on a set of full measure $\nu.$ Equivalent statement is that $\sigma(\xi)$ coincides with $\cB$ up to $\nu-$null sets. Hence, if we consider probability space $(\mathcal{C},\cB,\nu)$ and denote $\xi_t(x)=x(t),$ $\tilde{w}_t(x)=T(x)(t)$ we obtain weak solution $(\xi_t,\tilde{w}_t)_{t\in[0,1]}$ of \eqref{eq32} with $\sigma(\xi)=\sigma(\tilde{w}),$ which is a contradiction.

\end{example}

Next we turn to positive results. Let $g:[0,1]\to \mbR,$ $g(0)>0$ be continuous function;
$\tau_g(x)=\inf\{t\in [0,1]: x(t)=g(t)\}$ be the first moment when trajectory $x\in \mathcal{C}$ reaches $g$ ($\inf \emptyset = 1$).  Denote by $\vk_g$ the probability on $(\mathcal{C},\cB)$ defined via the density
$\frac{d\vk_g}{d\mu}=\mu(\tau_g=1)^{-1}\1_{\tau_g=1}$ with respect to the Wiener measure $\mu.$
In what follows we prove that the mapping $T_g$ defined in \eqref{eq31} for measure $\vk_g$ satisfies all the conditions of Theorem \ref{thm1}. Subsequently, this result is used to construct the orthogonal expansion of the space
$L^2(\mathcal{C},\vk_g).$ Further we will formulate and use analogous results for the case $t<1,$ i.e. for the space $(\mathcal{C}_t,\cB_t,\mu_t)$ and the measure $\vk_{g,t},$ $\frac{d\vk_{g,t}}{d\mu_t}(x)=\mu_t(\tau_g\geq t)^{-1}\1_{\tau_g \geq t}.$ Proofs will be omitted as they repeat proofs in the case $t=1.$

Denote by $\Gamma(g)$ the subgraph of $g:$ $\Gamma(g)=\{(s,y): 0\leq s \leq 1, \ y<g(s)\}.$ Let $\alpha(s,y,g)$ be the probability that the Brownian motion starting at the moment $s$ from the point $y$ doesn't reach $g:$
$$
\alpha(s,y,g)=\mathbb{P}(\forall t\in [s,1] \ y+w_{t-s}<g(t)), \ (s,y)\in \Gamma(g).
$$
In following lemmas the needed smoothness properties of the function $\alpha$ are proved.
\begin{lemma} \label{lem_cont_1}
The function $(s,y)\to \alpha(s,y,g),$ $(s,y)\in \Gamma(g)$ is continuous.
\end{lemma}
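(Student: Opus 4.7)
The plan is to realise all the probabilities $\alpha(s,y,g)$ on one and the same Brownian motion and deduce continuity of $\alpha$ from an almost-sure statement about sample paths. On a probability space carrying a Brownian motion $(w_u)_{u\in[0,1]}$, set
$$
M_{s,y}=\sup_{u\in[0,1-s]}\bigl(y+w_u-g(s+u)\bigr),\quad (s,y)\in \Gamma(g),
$$
so that $\alpha(s,y,g)=\mathbb{P}(M_{s,y}<0)$. Using joint continuity of $(s,y,u)\mapsto y+w_u-g(s+u)$ on the relevant compact and the Hausdorff convergence $[0,1-s_n]\to[0,1-s_0]$, a routine extraction-of-a-maximiser argument shows that $(s,y)\mapsto M_{s,y}(\omega)$ is continuous on $\Gamma(g)$ for almost every $\omega$.

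From the a.s.\ convergence $M_{s_n,y_n}\to M_{s_0,y_0}$ along any sequence $(s_n,y_n)\to (s_0,y_0)$ in $\Gamma(g)$ one readily obtains the two-sided bounds
$$
\mathbb{P}(M_{s_0,y_0}<0)\leq\liminf_n\alpha(s_n,y_n,g)\leq\limsup_n\alpha(s_n,y_n,g)\leq\mathbb{P}(M_{s_0,y_0}\leq 0),
$$
so the continuity of $\alpha$ at $(s_0,y_0)$ reduces to the atom-free statement $\mathbb{P}(M_{s_0,y_0}=0)=0$. This is the main obstacle.

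To establish it, put $\phi(u)=g(s_0+u)-y_0$, so $\phi(0)>0$, and introduce the stopping time $\tau=\inf\{u\in[0,1-s_0]\colon w_u=\phi(u)\}$ with the convention $\inf\emptyset=+\infty$. Then $\{M_{s_0,y_0}=0\}\subseteq\{\tau\leq 1-s_0\}$. The boundary piece $\{\tau=1-s_0\}\subseteq\{w_{1-s_0}=\phi(1-s_0)\}$ has probability zero because $w_{1-s_0}$ has a density. On $\{\tau<1-s_0\}$ the strong Markov property yields that $\tilde w_v:=w_{\tau+v}-w_\tau$ is a Brownian motion independent of $\cF_\tau$, and the identity $w_\tau=\phi(\tau)$ forces
$$
\{M_{s_0,y_0}=0\}\cap\{\tau<1-s_0\}\subseteq\{\tilde w_v\leq\psi(v)\text{ for all }v\in[0,1-s_0-\tau]\},
$$
where $\psi(v):=\phi(\tau+v)-\phi(\tau)$ is continuous with $\psi(0)=0$. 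The event $\{\tilde w_v>\psi(v)\text{ for some arbitrarily small }v>0\}$ is measurable with respect to the germ $\sigma$-field $\cF_{0+}^{\tilde w}$, so Blumenthal's $0$--$1$ law reduces its probability to $0$ or $1$, and the iterated-logarithm behaviour of $\tilde w$ at the origin combined with continuity of $\psi$ at $0$ selects the value $1$, contradicting the above inclusion. Quantifying this last comparison is the delicate point, since there one works only with the bare continuity of $g$; if $g$ turns out to be too rough, an approximation by smoother functions combined with monotonicity of $\alpha$ in $g$ can be invoked as a fallback.
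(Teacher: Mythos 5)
Your reduction of the lemma to the single no-atom statement $\mathbb{P}(M_{s_0,y_0}=0)=0$ is exactly the paper's reduction (the paper phrases it through the $\limsup$ of symmetric differences of the events rather than through a.s.\ convergence of the running maxima, but the content is the same). The gap is in how you dispose of that atom. Your key claim --- that Blumenthal's $0$--$1$ law together with the law of the iterated logarithm forces $\mathbb{P}(\tilde w_v>\psi(v)\ \text{for arbitrarily small}\ v>0)=1$ for \emph{any} continuous $\psi$ with $\psi(0)=0$ --- is false. Take $\psi(v)=v^{1/4}$: since $v^{1/4}/\sqrt{2v\ln\ln(1/v)}\to\infty$ as $v\to 0+$, the LIL gives $\tilde w_v\le(1+\varepsilon)\sqrt{2v\ln\ln(1/v)}\le v^{1/4}$ for all sufficiently small $v$ almost surely, so the Blumenthal alternative selects the value $0$, not $1$, and the event $\{\tilde w\le\psi\ \text{on}\ [0,T]\}$ has positive probability. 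Continuity of $\psi$ at $0$ is not enough; you need $\psi$ to be a lower-class function at $0$ (e.g.\ $\psi(v)=O(v)$, or the Kolmogorov--Petrovsky integral test). Here $\psi(v)=g(s_0+\tau+v)-g(s_0+\tau)$ is just an increment of $g$, and the lemma is stated for an arbitrary continuous $g$, whose local modulus of continuity can be as bad as $v^{1/4}$ at every point. So your argument proves the lemma only for (say) Lipschitz or $C^1$ boundaries.

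The fallback you sketch does not close this gap either: if $g_k$ are smooth and increase to $g$, then $\alpha(\cdot,\cdot,g_k)\nearrow\alpha(\cdot,\cdot,g)$ pointwise, and a monotone increasing limit of continuous functions is only lower semicontinuous; upgrading to continuity via Dini presupposes the very continuity you are trying to prove (and approximating from above runs into $\bigcap_k\{y+w<g_k\}=\{y+w\le g\}$, i.e.\ back into the atom event). The paper avoids all of this with a soft Gaussian argument: writing the atom event as $\{\max_{[0,1]}(w+f)=x\}$ with $f(t)=g(s)-g(t)$ and $x=g(s)-y>0$, it invokes the log-concavity result \cite[th.~4.2.2]{Bog_G} to conclude that $x\mapsto\ln\mathbb{P}(\max_{[0,1]}(w+f)\le x)$ is concave, hence continuous, on $(0,\infty)$, so the distribution of the maximum has no atoms there. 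That argument needs nothing from $g$ beyond continuity. To repair your proof you should replace the strong Markov/LIL step by this (or some other) atom-free argument valid for arbitrary continuous boundaries.
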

\begin{proof} [Proof of Lemma \ref{lem_cont_1}]
Assume that $(s_n,y_n)\in \Gamma(g),$ $(s_n,y_n)\to (s,y)\in \Gamma(g),$ $n\to \infty.$
Evidently,
$$
\overline{\lim}_{n\to\infty}(\{\forall t \in [s,1] \ y+w_t-w_s<g(t)\} \setminus
\{\forall t \in [s_n,1] \ y_n+w_t-w_{s_n}<g(t)\})=\emptyset,
$$
and
$$
\overline{\lim}_{n\to\infty}(\{\forall t \in [s_n,1] \ y_n+w_t-w_{s_n}<g(t)\} \setminus
\{\forall t \in [s,1] \ y+w_t-w_{s}<g(t)\})\subset
$$
$$
\subset \{\max_{t\in [s,1]} (y+w_t-w_s-g(t))=0\}.
$$
Denote $f(t)=g(s)-g(t),$ $x=g(s)-y>0.$ Without loss of generality assume that $s=0.$ It is enough to show that
\begin{equation}
\label{lem11}
\mathbb{P}(\max_{[0,1]} (w+f)=x)=0.
\end{equation}
Note that for all $x>0$ $\mathbb{P}(\max_{[0,1]} (w+f)\leq x)>0.$ According to \cite[th. 4.2.2]{Bog_G}
the function $F(x)=\ln \mathbb{P}(\max_{[0,1]} (w+f)\leq x),$ $x>0$ is concave. Hence, it is continuous and equality \eqref{lem11} and the convergence $\alpha(s_n,y_n,g)\to \alpha(s,y,g),$ $n\to\infty$ are proved.
\end{proof}

In the next lemma a useful representation of the function $\alpha$ is derived.
Consider $(s,z)\in \Gamma(g).$ There exists such $c>0$ that
\begin{equation}
\label{eq3.1}
\forall t\in[s,1] \ z-c(t-s)<g(t).
\end{equation}
For $r\geq s$ denote by $\beta(s,r,z,c,g)$ the probability that the Brownian motion starting at the moment $r$ from the point
$z-c(r-s)$ doesn't reach $g:$
$$
\beta(s,r,z,c,g)=\mathbb{P}(\forall t\in [r,1] \ z-c(r-s)+w_{t-r}<g(t))
$$
(put $\beta(s,r,z,c,g)=1$ for $r>1$).

\begin{lemma} \label{lem2}
For any $y<z$ the following representation holds.
\begin{equation}
\label{eq_repr}
\alpha(s,y,g)=\int^\infty_0 \beta(s,s+t,z,c,g)\frac{z-y}{\sqrt{2\pi t^3}}
e^{-\frac{(z-y-ct)^2}{2t}}dt.
\end{equation}

\end{lemma}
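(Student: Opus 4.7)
The plan is to interpret \eqref{eq_repr} as a first-passage decomposition for an auxiliary Brownian motion with drift. Introduce the process $Y_u=y+w_u+cu$, $u\geq 0$, which is a Brownian motion with drift $c>0$ starting from $y<z$, and set $\tau=\inf\{u\geq 0: Y_u=z\}$. A standard computation for Brownian hitting times (inverse Gaussian distribution) shows that $\tau$ is an honest random variable with density
\[
f_\tau(u)=\frac{z-y}{\sqrt{2\pi u^3}}\,e^{-(z-y-cu)^2/(2u)},\quad u>0,
\]
and $\int_0^\infty f_\tau(u)\,du=1$ because $c>0$. This is exactly the density appearing on the right-hand side of \eqref{eq_repr}.

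Using $y+w_u=Y_u-cu$, the event whose probability is $\alpha(s,y,g)$ rewrites as
\[
E=\{\forall u\in[0,1-s]:\ Y_u<g(s+u)+cu\}.
\]
The strict inequality in \eqref{eq3.1} says precisely that $z<g(s+u)+cu$ on $[0,1-s]$. Hence on $\{\tau>1-s\}$ one has $Y_u<z<g(s+u)+cu$ on all of $[0,1-s]$ and the event $E$ is automatic; on $\{\tau\leq 1-s\}$, the contribution on $[0,\tau]$ is automatic (with equality $Y_\tau=z$ still giving a strict inequality at the endpoint by \eqref{eq3.1}), so $E$ reduces to $\{\forall u\in[\tau,1-s]:\ y+w_u<g(s+u)\}$.

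The main step is the strong Markov property at $\tau$: on $\{\tau<\infty\}$ one has $y+w_\tau=Y_\tau-c\tau=z-c\tau$ and $(w_{\tau+v}-w_\tau)_{v\geq 0}$ is a Brownian motion independent of $\mathcal{F}_\tau$. Writing $r=s+\tau$, the conditional probability of the residual event on $\{\tau\leq 1-s\}$ is by definition $\beta(s,r,z,c,g)$. Integrating against $f_\tau$ therefore yields
\[
\alpha(s,y,g)=\mathbb{P}(\tau>1-s)+\int_0^{1-s}\beta(s,s+u,z,c,g)\,f_\tau(u)\,du,
\]
and the convention $\beta(s,r,z,c,g)=1$ for $r>1$, combined with $\mathbb{P}(\tau>1-s)=\int_{1-s}^\infty f_\tau(u)\,du$, collapses the two terms into the single integral \eqref{eq_repr}.

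The argument is essentially bookkeeping once the auxiliary drift process is in place. The only real points of care are (i) the use of the strict inequality in \eqref{eq3.1} to guarantee that the event on $[0,\tau]$ is automatic even at the endpoint $u=\tau$, and (ii) the $\mathcal{F}_\tau$-measurability of $\beta(s,s+\tau,z,c,g)$ needed to apply the strong Markov property; both are routine. No actual computation is required beyond quoting the density $f_\tau$.
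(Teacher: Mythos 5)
Your proof is correct and follows essentially the same route as the paper: both arguments apply the strong Markov property at the first passage time of the line $z-c(t-s)$ (equivalently, the hitting time of level $z$ by the drifted process $y+w_u+cu$) and invoke the known inverse-Gaussian density of that hitting time. The paper simply cites the density and states the conclusion in one line, whereas you spell out the bookkeeping (the role of the strict inequality \eqref{eq3.1} and the convention $\beta=1$ for $r>1$) that the paper leaves implicit.
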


\begin{proof} [Proof of Lemma \ref{lem2}]
Denote by  $\sigma$ the first moment when the Brownian motion starting at the moment $s$ from the point $y$ reaches the line $z-c(t-s):$
$$
\sigma=\inf\{t\in[s,1]: \ y+w_{t-s}= z-c(t-s)\}.
$$
Then the needed representation follows from the strong Markov property of the Brownian motion and the expression for the density of $\sigma$ \cite[ch.I, (9.2)]{RW}.
$$
\alpha(s,y,g)=\mathbb{E} \beta(s,\sigma,z,c,g)=
$$
$$
=\int^\infty_0 \beta(s,s+t,z,c,g)\frac{z-y}{\sqrt{2\pi t^3}}
e^{-\frac{(z-y-ct)^2}{2t}}dt.
$$
\end{proof}

\begin{corollary} \label{cor1}
For each $s\in[0,1]$ the function $y\mapsto \alpha(s,y,g)$ is infinitely differentiable on
$(-\infty,g(s)).$ For every $n\geq 0$ the mapping
$$
(s,y)\to \frac{\partial^n \alpha}{\partial y^n} (s,y,g), \ (s,y)\in \Gamma(g)
$$
is continuous. In particular, for each compact $A\subset \Gamma(g)$ and $n\geq 0$
$$
\inf_{(s,y)\in A} \alpha(s,y,g)>0, \ \sup_{(s,y)\in A} \Bigg|\frac{\partial^n \alpha }{\partial y^n}(s,y,g)\Bigg|<\infty.
$$
\end{corollary}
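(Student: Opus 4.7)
The plan is to differentiate the integral representation \eqref{eq_repr} of Lemma \ref{lem2} under the integral sign in $y$, and to read off joint continuity of the derivatives from dominated convergence.

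Fix an arbitrary $(s_0, y_0) \in \Gamma(g)$. I would first choose $z \in (y_0, g(s_0))$ and $c > 0$ large enough that $z - c(t - s_0) \le g(t) - \varepsilon$ on $[s_0, 1]$ for some $\varepsilon > 0$. By continuity of $g$, this gives a neighborhood $U \subset \Gamma(g)$ of $(s_0, y_0)$ on which $z - c(t-s) < g(t)$ for all $t \in [s,1]$, and on which moreover $y \le z - \delta$ for some $\delta > 0$. With these fixed $z, c$, the representation \eqref{eq_repr} applies throughout $U$. A second useful remark is that, by the strong Markov property, $\beta(s, s+u, z, c, g) = \alpha(s+u,\, z - cu,\, g)$ whenever $s+u \le 1$, so Lemma \ref{lem_cont_1} yields joint continuity of $(s, u) \mapsto \beta(s, s+u, z, c, g)$, together with the trivial bound $0 \le \beta \le 1$.

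Setting $p(u, v, c) := \frac{v}{\sqrt{2\pi u^3}}\, e^{-(v-cu)^2/(2u)}$, so that
\[
\alpha(s, y, g) = \int_0^\infty \beta(s, s+u, z, c, g)\, p(u, z-y, c)\, du,
\]
I would compute $\partial_y^n p(u, z-y, c)$ as a finite sum of terms of the form (polynomial in $z-y$, $u$ and $1/u$) times $\exp\!\bigl(-(z-y-cu)^2/(2u)\bigr)$. Since $z - y \ge \delta$ on $U$, the factor $\exp(-\delta^2/(2u))$ absorbs any inverse power of $u$ as $u \downarrow 0$, while $\exp(-c^2 u/2)$ absorbs any polynomial growth as $u \to \infty$; hence each $|\partial_y^n p(u, z-y, c)|$ admits a $u$-integrable majorant, uniform in $(s,y) \in U$. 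Differentiation under the integral sign is then justified, yielding
\[
\partial_y^n \alpha(s, y, g) = \int_0^\infty \beta(s, s+u, z, c, g)\, \partial_y^n p(u, z-y, c)\, du,
\]
and a further application of dominated convergence shows that this right-hand side is jointly continuous in $(s, y) \in U$. Since $(s_0, y_0) \in \Gamma(g)$ was arbitrary, $\partial_y^n \alpha$ is continuous on $\Gamma(g)$.

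For the last assertions, $\alpha(s, y, g) > 0$ pointwise on $\Gamma(g)$ because for $y < g(s)$ the Brownian path starting at $y$ stays strictly below the continuous barrier $g$ on $[s,1]$ with positive probability. Combined with Lemma \ref{lem_cont_1} and compactness, this gives $\inf_A \alpha > 0$, while joint continuity of $\partial_y^n \alpha$ gives $\sup_A |\partial_y^n \alpha| < \infty$. The only genuine obstacle in the argument is the $u \downarrow 0$ bound on $\partial_y^n p$, and it is precisely to control this singularity that $U$ is chosen so as to keep $z - y$ uniformly bounded below by $\delta$.
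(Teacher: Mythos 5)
Your proof is correct and follows essentially the same route as the paper: differentiate the representation \eqref{eq_repr} of Lemma \ref{lem2} under the integral sign, dominate the $y$-derivatives of the kernel using $z-y$ bounded away from $0$, and reduce joint continuity to the convergence $\beta(s_k,s_k+t,z,c,g)=\alpha(s_k+t,z-ct,g)\to\beta(s_0,s_0+t,z,c,g)$ supplied by Lemma \ref{lem_cont_1}. You merely spell out the integrable majorant and the positivity of $\alpha$ more explicitly than the paper does, which is harmless.
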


\begin{proof}[Proof of Corollary \ref{cor1}]
The differentiability of $\alpha(s,y,g)$ in $y$ is easily seen from the representation \eqref{eq_repr}. Assume that $(s_k,y_k)\in\Gamma(g),$ $k\geq 0,$ $(s_k,y_k)\to (s_0,y_0), \ k\to \infty.$ Choose $z\in (y_0,g(s_0))$ and $c>0$ such that \eqref{eq3.1} holds. We may assume that for all $k$ $z\in(y_k,g(s_k))$ and $z-c(t-s_k)<g(t),$ $t\in[s_k,1].$
According to \eqref{eq_repr}, for all $k\geq 0$
$$
\frac{\partial^n \alpha}{\partial y^n} (s_k,y_k,g)=\int^\infty_0 \beta(s_k,s_k+t,z,c,g)
\frac{\partial^n}{\partial y^n}\Bigg(\frac{z-y}{\sqrt{2\pi t^3}}
e^{-\frac{(z-y-ct)^2}{2t}}\Bigg)\Bigg|_{y=y_k}dt
$$
and only the convergence $\beta(s_k,s_k+t,z,c,g)\to\beta(s_0,s_0+t,z,c,g), \ k\to \infty$ must be checked. The case when $s_k+t\geq 1$ infinitely often is obvious. In the case when $s_k+t\leq 1$ one has
$$
\beta(s_k,s_k+t,z,c,g)=\alpha(s_k+t,z-ct,g)
$$
and convergence follows from Lemma \ref{lem_cont_1}.
\end{proof}

\begin{corollary} \label{cor2}
Consider the sequence of continuous functions $g_k\in C([0,1]),$ $k\geq 1$ which is increasing to the continuous function $g\in C([0,1]).$ Then for each
$(s,y)\in \Gamma(g)$ and $n\geq 0$
$$
\frac{\partial^n \alpha}{\partial y^n} (s,y,g_k)\to \frac{\partial^n \alpha}{\partial y^n} (s,y,g), \ k\to \infty.
$$
\end{corollary}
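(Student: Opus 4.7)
The plan is to proceed in two steps: first handle the base case $n=0$ directly from the probabilistic definition of $\alpha$ using monotone convergence and Dini's theorem, then bootstrap to $n\geq 1$ via the integral representation of Lemma \ref{lem2} and dominated convergence.

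For $n=0$, since $g_k \uparrow g$ pointwise with $g$ continuous, Dini's theorem gives uniform convergence on $[0,1]$. Define
$$
A_k=\{y+w_{t-s}<g_k(t),\ \forall t\in[s,1]\},\qquad A=\{y+w_{t-s}<g(t),\ \forall t\in[s,1]\}.
$$
The inclusions $A_k\subseteq A_{k+1}\subseteq A$ are immediate from $g_k\leq g_{k+1}\leq g$. Conversely, for $\omega\in A$ set $M(\omega)=\max_{t\in[s,1]}(y+w_{t-s}(\omega)-g(t))$, which is attained by continuity. If $M(\omega)<0$, then for $k$ large enough that $\|g-g_k\|_\infty<|M(\omega)|/2$, we have $\omega\in A_k$. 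Hence $A\setminus\bigcup_k A_k\subseteq\{M=0\}$, and this last event has probability zero by the very argument used at the end of the proof of Lemma \ref{lem_cont_1} (continuity of the concave function $x\mapsto \ln\mathbb{P}(\max_{[s,1]}(w+f)\le x)$). Therefore $\alpha(s,y,g_k)\uparrow\alpha(s,y,g)$.

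For $n\geq 1$, choose $z\in(y,g(s))$ and $c>0$ such that
$$
\delta:=\min_{t\in[s,1]}\bigl[g(t)-(z-c(t-s))\bigr]>0.
$$
By Dini, for all sufficiently large $k$ we have $\|g-g_k\|_\infty<\delta/2$, so simultaneously $z<g_k(s)$ and $z-c(t-s)<g_k(t)$ for $t\in[s,1]$. Thus Lemma \ref{lem2} and the differentiation-under-the-integral argument already established in the proof of Corollary \ref{cor1} apply with the \emph{same} pair $(z,c)$ to both $g$ and $g_k$, giving
$$
\frac{\partial^n\alpha}{\partial y^n}(s,y,g)=\int_0^\infty \beta(s,s+t,z,c,g)\,q_n(t)\,dt,
$$
$$
\frac{\partial^n\alpha}{\partial y^n}(s,y,g_k)=\int_0^\infty \beta(s,s+t,z,c,g_k)\,q_n(t)\,dt,
$$
where $q_n(t)=\frac{\partial^n}{\partial y^n}\!\left[\frac{z-y}{\sqrt{2\pi t^3}}e^{-(z-y-ct)^2/(2t)}\right]$.

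It remains to pass to the limit. For each fixed $t$ with $s+t\leq 1$, $\beta(s,s+t,z,c,g_k)=\alpha(s+t,z-ct,g_k)\to\alpha(s+t,z-ct,g)=\beta(s,s+t,z,c,g)$ by the case $n=0$ proved above, while for $s+t>1$ both sides equal $1$. Since $|\beta|\leq 1$ and $q_n$ is integrable on $(0,\infty)$ (super-exponential decay as $t\to 0^+$ from the factor $e^{-(z-y)^2/(2t)}$ and Gaussian decay as $t\to\infty$ through $e^{-c^2t/2}$ after completing the square in the exponent), dominated convergence yields the conclusion.

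The main technical point is the uniformity in $k$ of the choice of the auxiliary parameters $(z,c)$; once one observes that the strict gap $\delta>0$ is preserved under uniform perturbations of $g$, the rest reduces to a routine dominated-convergence argument on top of the $n=0$ case.
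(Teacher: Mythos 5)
Your proof is correct and follows essentially the same route as the paper: both reduce the problem, via the representation of Lemma \ref{lem2} with a single pair $(z,c)$ valid simultaneously for $g$ and all (large) $g_k$, to the convergence $\beta(s,r,z,c,g_k)\to\beta(s,r,z,c,g)$, which is handled by monotone continuity of probability, and then conclude by dominated convergence. The only cosmetic difference is that the paper fixes $(z,c)$ using $g_1$ so that monotonicity alone covers every $g_k$, whereas you fix it using $g$ and invoke Dini's theorem to cover large $k$; both choices work.
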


\begin{proof}[Proof of Corollary \ref{cor2}]
Assume that $(s,y)\in \Gamma(g_1).$ Consider $z\in (y,g_1(s))$ and $c>0$ such that \eqref{eq3.1} is satisfied for all functions $g_k$. The representation from Lemma \ref{lem2} implies that only the convergence
$$
\beta(s,r,z,c,g_k)\to \beta(s,r,z,c,g), \ k\to \infty
$$
must be checked. It easily follows from the continuity of probability  measure \cite[L.1.14]{K}.
\end{proof}

In the next theorem the Clark representation for $\1_{\tau_g=1}$ is found and the expression \eqref{eq31} in the case of the measure $\vk_g$ is clarified. As a corollary the measurable isomorphism between spaces $(\mathcal{C},\cB,\nu)$ and
$(\mathcal{C},\cB,\mu)$ satisfying all the conditions of Theorem \ref{thm1} is constructed.

\begin{theorem}
\label{thm2} i) The Clark representation for $\1_{\tau_g=1}$ has the form
\begin{equation}
\label{eq_ind}
\1_{\tau_g=1}=\alpha(0,0,g)+\int^1_0 \frac{\partial \alpha}{\partial y}(s,w_s,g)\1_{\tau_g\geq s} dw_s.
\end{equation}

ii) The set $\mathcal{X}_0=\{x\in \mathcal{C}: x<g \}$ and the mapping
$$
T_g(x)(t)=x(t)-\int^t_0 \frac{\partial \ln \alpha_g}{\partial y} (s,x(s),g) ds, \ x\in\mathcal{X}_0
$$
possess following properties:

1) $\vk_g(\mathcal{X}_0)=1;$

2) $T_g:\mathcal{X}_0\to \mathcal{C}$ is measurable and injective;

3) $\vk_g \circ T^{-1}_g=\mu;$

4) for each $x\in \mathcal{X}_0$ $T_g(x)-x \in \mathcal{H}$.
\end{theorem}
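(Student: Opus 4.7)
The two parts are tightly linked: the Clark representation in (i) produces the density martingale whose Girsanov drift is exactly the drift appearing in $T_g$, so I would prove (i) first and then read (ii) off from it. For (i), the starting observation is
$$
\mbE[\1_{\tau_g = 1} \mid \cB_s] = \alpha(s, w_s, g)\1_{\tau_g \geq s}, \qquad s \in [0,1],
$$
which follows from the strong Markov property of $w$ at the stopping time $s \wedge \tau_g$ together with the definition of $\alpha$. Call this continuous bounded martingale $M_s$; the task is to identify the integrand in its representation. On $\{s < \tau_g\}$ the path $(s,w_s)$ stays in $\Gamma(g)$, and Corollary \ref{cor1} makes $\alpha(\cdot,\cdot,g)$ smooth in $y$ with jointly continuous derivatives. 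If one could apply It\^o's formula, the martingale property of $M$ would force the backward heat equation $\tfrac{\partial \alpha}{\partial s} + \tfrac{1}{2}\tfrac{\partial^2 \alpha}{\partial y^2} = 0$ on $\Gamma(g)$, and the integrand would collapse to $\tfrac{\partial \alpha}{\partial y}(s,w_s,g)$. To supply the missing $s$-regularity, I would approximate from below by smooth $g_k \uparrow g$ with $g_k < g$: classical parabolic theory yields $\alpha(\cdot,\cdot,g_k) \in C^{1,2}(\Gamma(g_k))$ together with the heat equation, so It\^o at $s \wedge \tau_{g_k}$ gives the claimed representation for $g_k$, and Corollary \ref{cor2} combined with dominated convergence on compact subsets of $\Gamma(g)$ (controlled by Corollary \ref{cor1}), together with $\1_{\tau_{g_k}=1}\to\1_{\tau_g=1}$ in $L^2$, transfers the identity to $g$. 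This PDE-via-approximation step is the main technical obstacle.

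For (ii), assertion (1) is immediate since $\{\tau_g = 1\}$ differs from $\mathcal{X}_0$ only by the $\mu$-null set $\{w(1)=g(1)\}$, hence is $\varkappa_g$-null. For (4), every $x\in\mathcal{X}_0$ traces a compact curve in $\Gamma(g)$ on which, by Corollary \ref{cor1}, $|\tfrac{\partial \ln \alpha}{\partial y}(s,x(s),g)|$ is uniformly bounded; thus $T_g(x)-x$ is absolutely continuous with bounded derivative and lies in $\mathcal{H}$. Measurability in (2) is clear from the joint continuity of the integrand.

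For (3), set $\rho(s) = \alpha(s,w_s,g)\1_{\tau_g \geq s}/\alpha(0,0,g)$. Part (i) yields $d\rho(s) = \rho(s)\tfrac{\partial \ln \alpha}{\partial y}(s,w_s,g)\1_{\tau_g\geq s}\, dw_s$ with $\mbE\rho(1)=1$, so $\rho$ is a genuine positive martingale and $d\varkappa_g = \rho(1)\, d\mu$. The Girsanov theorem then shows that under $\varkappa_g$ the process $t \mapsto w_t - \int_0^t \tfrac{\partial \ln \alpha}{\partial y}(s, w_s, g)\,ds = T_g(w)(t)$ is a standard Brownian motion, i.e.\ $\varkappa_g \circ T_g^{-1} = \mu$. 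For injectivity in (2), I would recover $x$ from $\tilde x = T_g(x)$ through the integral equation $x(t) = \tilde x(t) + \int_0^t \tfrac{\partial \ln \alpha}{\partial y}(s,x(s),g)\,ds$: any two solutions with the same $\tilde x$ lie in a common compact $A\subset\Gamma(g)$ on which, by Corollary \ref{cor1} applied to $\tfrac{\partial^2 \alpha}{\partial y^2}$, $y\mapsto\tfrac{\partial \ln \alpha}{\partial y}(s,y,g)$ is Lipschitz, and Gronwall's inequality forces them to coincide.
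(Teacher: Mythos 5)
Your proposal follows essentially the same route as the paper: for (i), It\^o's formula applied to the martingale $\alpha(s,w_s,g)\1_{\tau_g\geq s}$ for smooth $g$ (the paper first flattens the boundary, working with $\gamma(s,y)$ for the shifted process $w_t-(g(t)-g(0))$ hitting the constant level $g(0)$ and citing Friedman for the resulting PDE with drift $-g'(s)$, but this is the same computation as your direct backward heat equation for $\alpha$ in $\Gamma(g)$ after the change of variables the paper itself records), followed by monotone approximation $g_k\uparrow g$ by smooth functions using Corollaries \ref{cor1} and \ref{cor2}; for (ii), the Girsanov framework of Section 4 for properties 1), 3), 4) and a Gronwall argument on a compact subset of $\Gamma(g)$ for injectivity, exactly as in the paper. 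The one place your sketch is looser than the paper's argument is the transfer of \eqref{eq_ind} from $g_k$ to $g$: ``dominated convergence on compact subsets of $\Gamma(g)$'' does not by itself give convergence of the integrands in $L^2(ds\,d\mu)$, since near $\tau_g$ the path $(s,w_s)$ leaves every fixed compact subset of $\Gamma(g)$; the paper instead first invokes Clark's theorem for $\1_{\tau_g=1}$ to obtain an abstract integrand $h$, uses the It\^o isometry together with the $L^2$-convergence $\1_{\tau_{g_k}=1}\to\1_{\tau_g=1}$ (which you do note) to conclude that the approximating integrands converge to $h$ in $L^2(ds\,d\mu)$, and only then identifies $h(s,x)$ a.e.\ along a subsequence via Corollary \ref{cor2}, treating the cases $\tau_g(x)<s$ and $\tau_g(x)\geq s$ separately. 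With that step rearranged accordingly, your argument is complete and coincides with the paper's.
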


\begin{proof} [Proof of Theorem \ref{thm2}]
i) Suppose that $g$ is continuously differentiable.
For every $s\in [0,1]$ and $y<g(0)$ let $\gamma(s,y)$ be the probability that the process $(y+w_t-w_s-(g(t)-g(s)))_{t\in[s,1]}$ doesn't achieve the level $g(0):$
$$
\gamma(s,y)=\mathbb{P}(\forall t\in [s,1] \ y+w_t-w_s-(g(t)-g(s))<g(0)).
$$
$\gamma$ satisfies the following boundary value problem \cite[ch.6, \S 5]{F}.
$$
\frac{\partial \gamma}{\partial s}(s,y)+\frac{1}{2}\frac{\partial^2 \gamma}{\partial^2 y}(s,y)-g'(s)\frac{\partial \gamma}{\partial y}(s,y)=0,
$$
$$
\gamma(1,y)=1, \ \gamma(s,g(0))=0.
$$
$\tau_g$ coincides with the first moment when the process $\eta_t=w_t-(g(t)-g(0)), \ t \in[0,1]$ achieves level $g(0).$ According to the It\^o formula,
$$
\1_{\tau_g=1}=\alpha(0,0,g)+\int^1_0 \frac{\partial \gamma}{\partial y}(s,\eta_s)\1_{\tau_g\geq s} dw_s.
$$
It is easy to see that in the case $y<g(s)$ one has
$$
\frac{\partial \alpha}{\partial y}(s,y,g)=\frac{\partial \gamma}{\partial y}(s,y-(g(s)-g(0))).
$$
Hence, the formula \eqref{eq_ind} is proved for continuously differentiable function $g.$

\begin{remark}
Used approach was proposed in \cite{D} to treat the case $g=\mbox{const}.$ Also, in the case $g=\mbox{const}$ formula
\eqref{eq_ind} was proved in \cite{PT} via the machinery of BV-functions on the Wiener space.
\end{remark}

Assume next that $g$ is arbitrary continuous function. Consider the sequence $(g_k)_{k\geq 1}$ of continuously differentiable functions which is increasing to the function $g.$ By the Clark theorem there exists an adapted square integrable $h:[0,1]\times \mathcal{C}\to \mbR,$ such that
$$
\1_{\tau_g=1}=\alpha(0,0,g) + \int^1_0 h(s) dw_s.
$$
It is enough to show that for a.a. $s\in[0,1]$
$$
h(s)=\frac{\partial \alpha}{\partial y}(s,w_s,g)\1_{\tau_g\geq s}, \ \mu_s-\mbox{a.e.}
$$
Evidently, $\1_{\tau_{g_k}=1}\nearrow \1_{\tau_{g}=1},$ $k\to\infty$ $\mu-$a.s. and in $L^2(\mathcal{C},\mu).$ Corollary \ref{cor2} implies that $\alpha(0,0,g_k)\to \alpha(0,0,g),$ $k\to \infty.$ Hence,
$$
\int_{\mathcal{X}} \Bigg(\int^1_0 \frac{\partial \alpha}{\partial y}(s,w_s,g_k)\1_{\tau_{g_k}\geq s} dw_s -
\int^1_0 h(s) dw_s \Bigg)^2 \mu(dx)=
$$
$$
=\int^1_0 \int_{\mathcal{X}} \Bigg( \frac{\partial \alpha}{\partial y}(s,x(s),g_k)\1_{\tau_{g_k}(x)\geq s} -h(s,x)  \Bigg)^2 \mu_s(dx) ds \to 0, \ k\to\infty.
$$
Passing to subsequences, we may assume that for a.a. $s\in [0,1]$ and $\mu_s-$a.a. $x\in \mathcal{C}_s$
$$
\frac{\partial \alpha}{\partial y}(s,x(s),g_k)\1_{\tau_{g_k}(x)\geq s}\to h(s,x), \ k\to \infty
$$
and $x(s)\ne g(s).$ If $\tau_g(x)<s$ then for all $k$ $\tau_{g_k}(x)<s$ and $h(s,x)=0.$ If $\tau_{g}(x)\geq s,$ then $x(s)<g(s)$ and corollary \ref{cor2} implies that $h(s,x)=\frac{\partial \alpha}{\partial y}(s,x(s),g).$
Formula \eqref{eq_ind} is proved for all $g\in C([0,1]).$

ii)  According to the Markov property of the Brownian motion
\begin{equation}
\label{eq_ind2}
\mathbb{E}[\1_{\tau_g=1}/\cB_s]=\1_{\tau_g\geq s}\alpha(s,w_s,g).
\end{equation}
From \eqref{eq_ind} and \eqref{eq_ind2} it follows that in the case of the measure $\vk_g$ the relation \eqref{eq31}  takes the form
$$
T_g(x)(t)=x(t)-\int^t_0 \frac{\partial \ln \alpha_g}{\partial y} (s,x(s),g) ds.
$$
Hence, as it is shown in the beginning of this chapter, the properties 1), 3), 4) are satisfied. We will check injectivity of the mapping $T_g.$ Assume that $x_1,x_2\in \mathcal{X}_0,$ $T_g(x_1)=T_g(x_2).$ There exist $a,\gamma>0$ such that for any $s\in [0,1]$ $x_1(s),x_2(s)\in [-a,g(s)-\gamma].$ Corollary \ref{cor1} implies the existence of a constant $A>0$ such that for all $s\in [0,1]$ $y\in [-a,g(s)-\gamma]$
$\Bigg|\frac{\partial^2 (\ln \alpha)}{\partial y^2}(s,y,g) \Bigg|\leq A.$ Hence, for each $t\in[0,1]$
$$
|x_1(t)-x_2(t)|\leq A \int^t_0 |x_1(s)-x_2(s)|ds
$$
which is possible only when $x_1=x_2.$

\end{proof}

As a corollary of Theorems \ref{thm1} and \ref{thm2} we construct the orthogonal expansion of the space $L^2(\mathcal{C}_t,\vk_{g,t}).$
For each $s\in [0,t],$ $y<g(s)$ denote by
$\alpha^t(s,y,g)$ the probability that the Brownian motion starting at the moment $s$ from the point $y$ doesn't reach $g$ up to the moment $t:$
$$
\alpha^t(s,y,g)=\mathbb{P}(\forall r\in [s,t] \ y+w_{r-s}<g(r)).
$$
Define the mapping $I^{\vk_{g,t}}_n:L^2_s([0,t]^n) \rightarrow L^2(\mathcal{C}_t,\vk_{g,t})$ as follows:
\begin{equation}
\label{comp}
I^{\vk_{g,t}}_na_n=\sum^n_{m=0} (-1)^m C^m_n \int_{[0,t]^m}\int_{[0,t]^{n-m}} a_n(r,s)dw_r \prod^m_{i=1}
\frac{\partial (\ln \alpha^t)}{\partial y}(s_i,w_{s_i},g)ds.
\end{equation}

\begin{corollary} \label{cor3}
Mappings $I^{\vk_{g,t}}_n$ possess following properties:

1) $\frac{1}{\sqrt{n!}}I^{\vk_{g,t}}_n$ is the isometrical embedding;

2) spaces $\cK_n(\mathcal{X}_t,\vk_{g,t})=I^{\vk_{g,t}}_n(L^2_s([0,t]^n))$ are mutually orthogonal;

3) $L^2(\mathcal{X}_t,\vk_{g,t})=\mathop{\oplus}\limits^\infty_{n=0}\cK_n(\mathcal{X}_t,\vk_{g,t}).$
\end{corollary}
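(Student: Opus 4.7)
The plan is to derive this corollary as an almost direct application of Theorem \ref{thm1} to the measurable isomorphism $T_{g,t}$ provided by (the $t<1$ analog of) Theorem \ref{thm2}, followed by a routine unpacking of the abstract formula \eqref{eq113} into the concrete expression \eqref{comp}.

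First I would verify the hypotheses. The measure $\vk_{g,t}$ is absolutely continuous with respect to $\mu_t$ directly from its definition, so condition 5) of Theorem \ref{thm1} holds. The analog of Theorem \ref{thm2} for $t<1$ supplies a map $T_{g,t}$ on a $\vk_{g,t}$-conull subset of $\mathcal{C}_t$ that fulfills conditions 1)--4): measurability, injectivity, $\vk_{g,t}\circ T_{g,t}^{-1}=\mu_t$, and $T_{g,t}(x)-x\in \mathcal{H}_t$. Lemma \ref{isomorphism} applied to $T_{g,t}$ then yields the operators $\widehat{T}_{g,t}\circ I^{\mu_t}_n$, which automatically satisfy properties 1), 2), 3) of the corollary (normalized isometry, mutual orthogonality, and density of the sum). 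The question therefore reduces to showing that the right-hand side of \eqref{comp} coincides with $\widehat{T}_{g,t}\circ I^{\mu_t}_n a_n$ for every $a_n \in L^2_s([0,t]^n)$.

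For that identification I would invoke the explicit formula \eqref{eq113} from Theorem \ref{thm1}. Under the canonical identifications $\mathcal{H}_t \cong L^2([0,t])$ (via $h\mapsto \int_0^{\cdot}h(s)\,ds$) and $\mathcal{L}^{(n)}_{HS}(\mathcal{H}_t)\cong L^2_s([0,t]^n)$, Theorem \ref{thm2} shows that the Cameron--Martin displacement $T_{g,t}(x)-x$ corresponds to the $L^2([0,t])$-function $s\mapsto -\tfrac{\partial \ln\alpha^t}{\partial y}(s,x(s),g)$, while the symmetric form attached to the kernel $a_n$ acts by integration against $a_n$. Feeding $T_{g,t}(x)-x$ into the last $m$ slots of $A_n$, exploiting the symmetry of $a_n$, and writing $I^{\mu_t}_{n-m}$ as an iterated stochastic integral, one obtains
\begin{equation*}
I^{\mu_t}_{n-m} A_n(x)\bigl(T_{g,t}(x)-x,\ldots,T_{g,t}(x)-x\bigr) = (-1)^m \int_{[0,t]^m}\int_{[0,t]^{n-m}} a_n(r,s)\,dw_r\prod_{i=1}^{m}\frac{\partial \ln\alpha^t}{\partial y}(s_i,w_{s_i},g)\,ds.
\end{equation*}
Summing with the binomial weights $\binom{n}{m}$ supplied by \eqref{eq113} reproduces \eqref{comp} exactly, which finishes the corollary.

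The main obstacle is purely bookkeeping: one has to keep the identifications $\mathcal{H}_t\leftrightarrow L^2([0,t])$ and $\mathcal{L}^{(n)}_{HS}(\mathcal{H}_t)\leftrightarrow L^2_s([0,t]^n)$ consistent, track the minus sign coming from $T_{g,t}(x)-x$, and recognize the mixed deterministic--stochastic integrals in \eqref{comp} as the natural image of $I^{\mu_t}_{n-m}A_n$ evaluated on the Cameron--Martin vector $T_{g,t}(x)-x$. All analytic content is already contained in Theorems \ref{thm1} and \ref{thm2}; this corollary requires no new estimates.
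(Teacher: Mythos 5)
Your proposal is correct and follows exactly the route the paper intends: the corollary is stated there without a written proof precisely because it is the combination of Theorem \ref{thm1} (whose hypotheses 1)--4) are supplied by the $t<1$ analogue of Theorem \ref{thm2} and whose hypothesis 5) is immediate from the definition of $\vk_{g,t}$) with the identification of the Cameron--Martin displacement $T_{g,t}(x)-x$ with the $L^2([0,t])$-function $s\mapsto -\tfrac{\partial \ln\alpha^t}{\partial y}(s,x(s),g)$, turning \eqref{eq113} into \eqref{comp}. Your bookkeeping of the sign and the binomial coefficients matches the paper's formula.
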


\begin{remark}
According to the stochastic Fubini theorem \cite[th.4.18]{DaPZ}, the expression \eqref{comp} can be written in the form
$$
I^{\vk_{g,t}}_na_n=n!\sum_{1\leq k_1<\ldots  <k_m\leq n,0\leq m\leq n} (-1)^m \int^t_0 \int^{r_n}_0 \ldots \int^{r_2}_0
$$
$$
a_n(r_1,\ldots,r_n) dw_{r_1}\ldots dw_{r_{k_1-1}} \frac{\partial (\ln \alpha^t)}{\partial y}(r_{k_1},w_{r_{k_1}},g)dr_{k_1} dw_{r_{k_1}+1}\ldots dw_{r_n}.
$$
Formally this is equivalent to
$$
I^{\vk_{g,t}}_na_n=n!\int^t_0 \int^{r_n}_0 \ldots \int^{r_2}_0 a_n(r_1,\ldots,r_n)
$$
$$
(dw_{r_1}-\frac{\partial(\ln \alpha^t)}{\partial y}(r_1,w_{r_1},g)dr_1)\ldots
(dw_{r_n}-\frac{\partial(\ln \alpha^t)}{\partial y}(r_n,w_{r_n},g)dr_n).
$$
From the last expression the orthogonality of stochastic integrals $I^{\vk_{g,t}}_na_n$ and $I^{\vk_{g,t}}_ka_k$ of different degrees with respect to $\vk_{g,t}$ is obvious. Indeed, under the measure $\vk_{g,t}$ the process $(x(s)-\int^s_0 \frac{\partial(\ln \alpha^t)}{\partial y}(r,x(r),g)dr)_{s\in[0,t]}$ is a Brownian motion.
\end{remark}

\section{Orthogonal expansion of the space $L^2(\eta_g)$}

Recall that $\eta_g(\cdot)=w(\min(\tau_g(w),\cdot)),$ where $\tau_g(x)=\inf\{t\in [0,1]: x(t)=g(t)\}.$
Denote by $\nu_g$ the distribution of $\eta_g$ in the space $\mathcal{C}.$
$\nu_g$ is the image of the Wiener measure $\mu$ under the mapping
$F_g:\mathcal{C}\rightarrow \mathcal{C},$ $F_g(x)(t)=x(\min(\tau_g(x),t)).$ Denote by $\cB_{\tau_g}$ the $\sigma-$field of $\tau_g-$measurable sets:
$$
\cB_{\tau_g}=\{A\in \cB: \forall t\in[0,1] \ A\cap\{\tau_g\leq t\}\in \cB_t\}.
$$
In the next lemma the structure of the $\sigma-$field $\sigma(F_g)$ is described.
\begin{lemma}
\label{lem1}
\cite[L.1.3.3]{SW} Following $\sigma-$fields coincide:
$$
\{A\in \cB: F^{-1}_g(A)=A\}=\sigma(F_g)=\cB_{\tau_g}.
$$
\end{lemma}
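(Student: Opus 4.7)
The cornerstone of the argument is the idempotence of $F_g$: for every $x \in \mathcal{C}$ one has $F_g(F_g(x)) = F_g(x)$, because freezing a path at $\tau_g(x)$ produces a path whose first hitting time of $g$ is again $\tau_g(x)$. From this the equality $\sigma(F_g) = \{A \in \cB : F_g^{-1}(A) = A\}$ is immediate: if $A = F_g^{-1}(B)$ then $F_g^{-1}(A) = (F_g \circ F_g)^{-1}(B) = F_g^{-1}(B) = A$; conversely, any $A \in \cB$ with $F_g^{-1}(A) = A$ belongs to $\sigma(F_g)$ tautologically by choosing $B = A$.

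The next step is the inclusion $\sigma(F_g) \subseteq \cB_{\tau_g}$. Fix $A = F_g^{-1}(B)$ and $t \in [0,1]$. On the set $\{\tau_g \leq t\}$ each coordinate of the stopped path, $F_g(x)(s) = x(s \wedge \tau_g(x))$, is $\cB_t$-measurable: for $s \leq t$ the time index never exceeds $t$, while for $s > t$ one has $s \wedge \tau_g(x) = \tau_g(x) \leq t$ and $w_{\tau_g}$ is $\cB_{\tau_g}$-measurable, hence $\cB_t$-measurable on $\{\tau_g \leq t\}$. Therefore the restriction of $F_g$ to $\{\tau_g \leq t\}$ is $(\cB_t, \cB)$-measurable and $A \cap \{\tau_g \leq t\} \in \cB_t$.

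The main obstacle is the reverse inclusion $\cB_{\tau_g} \subseteq \sigma(F_g)$, which is a version of Galmarino's test. I would attack it by a monotone class argument applied to the vector space $\mathcal{D}$ of bounded $\cB$-measurable functions $f$ satisfying $f = f \circ F_g$; this class is closed under bounded monotone limits, so it suffices to show it contains a generating family for $\cB_{\tau_g}$. Two observations drive this. First, $\tau_g$ is itself $\sigma(F_g)$-measurable, because $\tau_g(x)$ is the first time the frozen path $F_g(x)$ meets $g$ (the hypothesis $g(0) > 0$ forces paths to start strictly below $g$, so $\tau_g$ can be read off unambiguously). Second, $w_{s \wedge \tau_g}(x) = F_g(x)(s)$ is manifestly $\sigma(F_g)$-measurable for every $s$. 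The delicate point is then to verify that $\cB_{\tau_g}$ is in fact generated by the family $\{w_{s \wedge \tau_g} : s \in [0,1]\} \cup \{\tau_g\}$; this classical fact can be established by decomposing any $A \in \cB_{\tau_g}$ along the partition $\{\tau_g \leq q\}$ for rational $q$, approximating $A \cap \{\tau_g \leq q\}$ by finite-dimensional cylinders in $\cB_q$, and exploiting the continuity of paths to replace $w_s$ by $w_{s \wedge \tau_g}$ for $s > \tau_g$ in the limit.
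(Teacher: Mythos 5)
The paper offers no proof of this lemma: it is quoted verbatim from Stroock--Varadhan \cite[L.1.3.3]{SW} (Galmarino's test), so your attempt has to stand on its own. Its first three steps do: the idempotence $F_g\circ F_g=F_g$ is correct (the stopped path first meets $g$ exactly at $\tau_g(x)$, since $g(0)>0$ keeps the path strictly below $g$ before $\tau_g$), the identification $\sigma(F_g)=\{A\in\cB:F_g^{-1}(A)=A\}$ follows from it as you say, and the inclusion $\sigma(F_g)\subseteq\cB_{\tau_g}$ via the $\cB_t$-measurability of $F_g$ restricted to $\{\tau_g\leq t\}$ is fine.

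The gap is in the reverse inclusion $\cB_{\tau_g}\subseteq\sigma(F_g)$. You reduce it to the claim that $\cB_{\tau_g}$ is generated by $\{w_{s\wedge\tau_g}:s\in[0,1]\}\cup\{\tau_g\}$. But each of those functions is a function of $F_g$ (indeed $w_{s\wedge\tau_g}(x)=F_g(x)(s)$ and $\tau_g=\tau_g\circ F_g$), so this family generates exactly $\sigma(F_g)$, and your "delicate point" is word for word the inclusion you set out to prove; the monotone class scaffolding adds nothing. The sketch you offer for that point does not close it: approximating $A\cap\{\tau_g\leq q\}$ by finite-dimensional cylinders is an approximation in the sense of generated $\sigma$-fields, not a setwise limit, and replacing $w_s$ by $w_{s\wedge\tau_g}$ inside a cylinder genuinely changes the set on $\{\tau_g<s\}$, so there is no limit in which the replacement becomes harmless. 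The standard repair is pointwise: fix $A\in\cB_{\tau_g}$ and $x\in\mathcal{C}$, and put $t_0=\tau_g(x)$. Then $A\cap\{\tau_g\leq t_0\}\in\cB_{t_0}$, and by the Doob--Dynkin lemma a $\cB_{t_0}$-set is determined by the restriction of the path to $[0,t_0]$. Since $x$ and $F_g(x)$ coincide on $[0,t_0]$ and $\tau_g(F_g(x))=t_0$, both paths lie in $\{\tau_g\leq t_0\}$ and hence $x\in A$ if and only if $F_g(x)\in A$; that is, $F_g^{-1}(A)=A$, so $A\in\sigma(F_g)$. With this substitution your proof is complete, and the monotone class step can be deleted.
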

According to Lemma \ref{lem1}
\begin{equation}
\label{eq5}
\mathbb{E}_{\mu}[I^{\mu}_n a_n/F_g]=n\int^{1}_0 \1_{\tau_g\geq t}I^{\mu_t}_{n-1} (a_n(\cdot,t))dw_t.
\end{equation}
In the next theorem it is proved that in order to get the orthogonal expansion of $L^2(\mathcal{C},\nu_g)$ it is enough to substitute $I^{\vk_{g,t}}_{n-1}$ for
$I^{\mu}_{n-1}$ in \eqref{eq5}.
Define the mapping $I^{\nu_g}_n$ as follows:
$$
I^{\nu_g}_n a_n= n\int^{\tau_g}_0 I^{\vk_{g,t}}_{n-1}(a_n(\cdot,t))dw_t, \ a_n\in L^2_s([0,1]^n).
$$

\begin{theorem}
\label{thm3}
Mappings $I^{\nu_g}_n$
possess following properties:

1) $I^{\nu_g}_n:L^2_s([0,1]^n) \rightarrow L^2(\mathcal{C},\nu_{g})$ is the isomorphic embedding;

2) spaces $\cK_n(\mathcal{C},\nu_g)=I^{\nu_g}_n(L^2_s([0,1]^n))$ are mutually orthogonal;

3) $L^2(\mathcal{C},\nu_g)=\mathop{\oplus}\limits^\infty_{n=0}\cK_n(\mathcal{C},\nu_g).$
\end{theorem}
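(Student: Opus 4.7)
The plan is to reduce all three properties to the fiberwise orthogonal expansion of Corollary~\ref{cor3} together with the martingale representation adapted to the stopping time $\tau_g$. Throughout I identify $L^2(\mathcal{C},\nu_g)$ with $L^2(\mathcal{C},\cB_{\tau_g},\mu)$ via $f\mapsto f\circ F_g$ (an isometry by the definition of $\nu_g$ and Lemma~\ref{lem1}); since $I^{\nu_g}_n a_n$ is a stochastic integral stopped at $\tau_g$, it is $\cB_{\tau_g}$-measurable by Lemma~\ref{lem1}, so this identification applies. The case $n=0$ is interpreted as the standard embedding of constants.

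For properties 1) and 2), the key observation is that the restriction of $\mu$ to $\cB_t$ conditioned on $\{\tau_g\ge t\}$ is exactly $\vk_{g,t}$, so $\mathbb{E}_{\mu}[\1_{\tau_g\ge t}\,\cdot\,]=\mu_t(\tau_g\ge t)\,\mathbb{E}_{\vk_{g,t}}[\cdot]$ on $\cB_t$-measurable integrands. Combining this with It\^o's isometry applied to the outer $dw_t$-integrals yields
$$
\langle I^{\nu_g}_n a_n,\, I^{\nu_g}_k b_k\rangle_{\nu_g}
= nk \int_0^1 \mu_t(\tau_g\ge t)\,\langle I^{\vk_{g,t}}_{n-1}a_n(\cdot,t),\,I^{\vk_{g,t}}_{k-1}b_k(\cdot,t)\rangle_{\vk_{g,t}}\,dt.
$$
When $n\ne k$ the integrand vanishes by Corollary~\ref{cor3}, giving 2). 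When $n=k$ the same corollary gives
$\|I^{\nu_g}_n a_n\|^2_{\nu_g} = n^2(n-1)!\int_0^1 \mu_t(\tau_g\ge t)\|a_n(\cdot,t)\|^2_{L^2([0,t]^{n-1})}\,dt$. Combined with the symmetry identity $n\int_0^1\|a_n(\cdot,t)\|^2_{L^2([0,t]^{n-1})}\,dt=\|a_n\|^2_{L^2([0,1]^n)}$ and the two-sided bound $\alpha(0,0,g)\le\mu_t(\tau_g\ge t)\le 1$ (monotonicity of $t\mapsto\mathbb{P}(\tau_g\ge t)$, with strict positivity guaranteed by Corollary~\ref{cor1}), this gives $\alpha(0,0,g)\,n!\|a_n\|^2\le\|I^{\nu_g}_n a_n\|^2\le n!\|a_n\|^2$, showing that $I^{\nu_g}_n$ is an isomorphic embedding with closed range $\cK_n(\mathcal{C},\nu_g)$.

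For property 3), any $F\in L^2(\mathcal{C},\cB_{\tau_g},\mu)$ admits a stopped Clark representation $F=\mathbb{E}[F]+\int_0^{\tau_g}h_s\,dw_s$, because the martingale $(\mathbb{E}[F|\cB_t])_{t\in[0,1]}$ is constant on $\{t\ge\tau_g\}$ and hence its integrand is supported on $\{s\le\tau_g\}$. For each $s$, the $\cB_s$-measurable function $h_s$ admits a unique $\vk_{g,s}$-chaos expansion $h_s=\sum_{k\ge 0}I^{\vk_{g,s}}_k a_{k+1}(\cdot,s)$ by Corollary~\ref{cor3}. Substituting back and interchanging the sum with the outer stochastic integral yields
$F=\mathbb{E}[F]+\sum_{n\ge 1}\tfrac{1}{n}I^{\nu_g}_n a_n$, which proves 3).

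The main obstacle is the technical bookkeeping in the completeness step: the kernels $a_{k+1}(\cdot,s)$, a priori defined $\vk_{g,s}$-almost surely for each fixed $s$, must depend jointly measurably on $s$ and assemble into symmetric elements $a_n\in L^2([0,1]^n)$, after which the Parseval identity
$\mathbb{E}\int_0^{\tau_g}h_s^2\,ds=\sum_k k!\int_0^1\mu_s(\tau_g\ge s)\|a_{k+1}(\cdot,s)\|^2\,ds$
controls the interchange of sum and stochastic integral in $L^2(\mu)$. This is handled by constructing each $a_n$ from $h$ via iterated integration against the compensated increments $dw_r-\tfrac{\partial\ln\alpha^s}{\partial y}(r,w_r,g)\,dr$ appearing in the remark after Corollary~\ref{cor3} (a Brownian motion under $\vk_{g,s}$), and applying Fubini fiberwise in $s$.
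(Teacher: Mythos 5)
Your proposal is correct and follows essentially the same route as the paper: the identification $f\mapsto f\circ F_g$ via Lemma \ref{lem1}, the It\^o isometry combined with $\mathbb{E}_{\mu}[\1_{\tau_g\geq t}\,\cdot\,]=\mu_t(\tau_g\geq t)\mathbb{E}_{\vk_{g,t}}[\,\cdot\,]$ for properties 1) and 2), and the stopped Clark representation with fiberwise application of Corollary \ref{cor3} for completeness. The only (harmless) cosmetic difference is the normalization of the symmetrized kernels, where your factor $\tfrac1n$ matches the paper's convention of absorbing it into the symmetrization $b_n$.
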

\begin{proof} [Proof of Theorem \ref{thm3}]
Note that the function $x\rightarrow I^{\nu_g}_n a_n(x)$ is $\cB_{\tau_g}$-measurable. According to lemma \ref{lem1} it is $F_g$-invariant. Hence,
$$
\int_{\mathcal{C}} (I^{\nu_g}_n a_n(x))^2 \nu_g(dx)= \int_{\mathcal{C}} (I^{\nu_g}_n a_n(F_g(x)))^2 \mu(dx)=
$$
$$
=\int_{\mathcal{C}} (I^{\nu_g}_n a_n(x))^2 \mu(dx)=n^2\int_{\mathcal{C}} \Bigg(\int^1_0 \1_{\tau_g\geq t} I^{\vk_{g,t}}_{n-1} (a_n(\cdot,t))dw_t\Bigg)^2 d\mu=
$$
$$
=n^2\int^1_0 \int_{\mathcal{C}_t} \1_{\tau_g\geq t} (I^{\vk_{g,t}}_{n-1} (a_n(\cdot,t)))^2 d\mu_t dt=
$$
$$
=n^2\int^1_0 \mu(\tau_g\geq t) \int_{\mathcal{C}_t} (I^{\vk_{g,t}}_{n-1} (a_n(\cdot,t)))^2 d\vk_{g,t}dt=
$$
$$
=n n!\int^1_0 \mu(\tau_g\geq t) \int_{[0,t]^{n-1}} a_n(s,t)^2 ds dt.
$$
Consequently,
$$
\mu(\tau_g=1)n! \int_{[0,1]^n} a_n(t)^2dt\leq \int_{\mathcal{C}} (I^{\nu_g}_n a_n(x))^2 \nu_g(dx)\leq
$$
$$
\leq n! \int_{[0,1]^n} a_n(t)^2dt.
$$
This proves that $I^{\nu_g}_n$ is the isomorphic embedding.

Analogous calculation proves the orthogonality of spaces $\cK_n(\mathcal{C},\nu_g).$ The totality of spaces $\cK_n(\mathcal{C},\nu_g)$ in $L^2(\mathcal{C},\nu_g)$ is left to be proved. Consider $f\in L^2(\mathcal{C},\nu_g).$ Without loss of generality we may assume that $\int_\mathcal{C} fd\nu_g=0.$ The composition $f\circ F_g$ belongs to $L^2(\mathcal{X},\mu).$ Hence, there exists an adapted square integrable function $h:[0,1]\times \mathcal{C}\rightarrow \mbR,$ such that
$$
f\circ F_g=\int^1_0 h_t dw_t, \mu-a.e.
$$
Lemma \ref{lem1} implies that after conditioning with respect to $\sigma(F_g)$ one has
$$
f\circ F_g=\int^1_0 \1_{\tau_g\geq t}h_t dw_t, \mu-a.e.
$$
Both sides of the last equality are measurable with respect to $F_g.$ Hence,
$$
f=\int^1_0 \1_{\tau_g\geq t}h_t dw_t, \nu_g-a.e.
$$
For a.a. $t\in [0,1]$ $h_t\in L^2(\mathcal{C}_t, \mu_t)\subset L^2(\mathcal{C}_t, \vk_{g,t}).$ Corollary \ref{cor3} implies the existence of unique sequence $a^t_n\in L^2_s([0, t]^n), \ n\geq 0,$ such that
$$
h_t=\sum^\infty_{n=0} I^{\vk_{g,t}}_n a^t_n.
$$
Hence, the square of norm of $f$ can be calculated in the following way.
$$
\int_{\mathcal{C}} f^2 d\nu_g=
\int_{\mathcal{C}}\Bigg(\int^1_0 \1_{\tau_g\geq t}h_t dw_t\Bigg)^2 d\mu=
\int^1_0 \int_{\mathcal{C}_t} \1_{\tau_g\geq t} h_t^2 d\mu_t dt=
$$
$$
=
\int^1_0 \mu(\tau_g\geq t)\int_{\mathcal{X}_t} (h(t,x))^2 \vk_{g,t}(dx) dt=
$$
$$
=\int^1_0 \mu(\tau_g\geq t)\sum^{\infty}_{n=0} \int_{\mathcal{C}_t} (I^{\vk_{g,t}}_n a^t_n)^2 d\vk_{g,t} dt=
$$
$$
=
\sum^{\infty}_{n=0} \int^1_0 \mu(\tau_g\geq t) \int_{\mathcal{C}_t} (I^{\vk_{g,t}}_n a^t_n)^2 d\vk_{g,t} dt=
\sum^{\infty}_{n=0} \int_{\mathcal{C}} (I^{\nu_g}_{n+1} b_{n+1})^2 d\nu_g.
$$
In the last expression $b_{n+1}$ denotes the symmetrization of the function $(s,t)\rightarrow a^t_n(s)$ to $[0,1]^{n+1}.$ This proves the Parseval identity for $f.$ Theorem is proved.
\end{proof}

\begin{example} Assume that $f\in L^2(\eta_g)$ has the Clark representation of the form
$$
f=\int^{\tau_g}_0 f_t(\eta_g(t))d\eta_g(t).
$$
As is seen from the proof of Theorem \ref{thm3} to expand $f$ as the series $\sum^{\infty}_{n=1} I^{\nu_g}_n b_n$ one has to obtain the representation of the functional $x\to f_t(x(t))$ with respect to the measure $\vk_{g,t}$
$$
f_t(x(t))=\sum^{\infty}_{n=0} I^{\vk_{g,t}}_n a^t_n
$$
and define $b_n$ as the symmetrization of $(s,t)\to a^t_{n-1}(s).$
According to the Lemma \ref{isomorphism} kernels $a^t_n$ coincide with kernels in the It\^o-Wiener expansion of $f_t(\xi_t),$ where $(\xi_s)_{s\in[0,t]}$ is the solution of the SDE
\begin{equation}
\begin{cases}
d\xi_s = \frac{\partial \ln \alpha_g}{\partial y} (s,\xi_s,g) ds + dw_s \\
\xi_0=0.
\end{cases}
\end{equation}
The latter expansion is given by the Krylov-Veretennikov formula \eqref{intro2}.
\end{example}

\end{document}